\begin{document}

\title[Duality \& Transport on Magnetic Graphs]{Kantorovich Duality and Optimal Transport Problems on Magnetic Graphs}
\author{Sawyer Jack Robertson}
\thanks{The author sincerely thanks his advisor, Javier Alejandro Ch\'{a}vez-Dom\'{i}nguez at the University of Oklahoma who has provided years of support and mentorship.}
\thanks{This research was supported in part by a National Merit Scholarship provided to the author jointly by the National Merit Scholarship Corporation and the University of Oklahoma.}
\subjclass{Primary 39A12, 05C22; Secondary 05C50}
\keywords{signed graphs, wasserstein distance, optimal transport, graph theory}
\begin{abstract}
    We consider Lipschitz- and Arens-Eells-type function spaces constructed for magnetic graphs, which are adapted to this setting from the area of optimal transport on discrete spaces. After establishing the duality between these spaces, we prove a characterization of the extreme points of the unit ball in the magnetic Lipschitz space as well as a result identifying the magnetic Arens-Eells space as a quotient of the classical Arens-Eells space of an associated classical graph called the lift graph.
\end{abstract}
\address{University of Oklahoma}
\email{sawyerjack@ou.edu}
\date{\today}

\maketitle   

\section{Introduction}
\subsection{Background}
Let $G=(V(G),E(G))$ be an undirected, finite graph without loops or multiple edges (henceforth `simple'), and suppose one has two mass (probability) distributions $\mu,\nu:V(G)\rightarrow\R$. A common question concerns how one may transport the mass distribution $\mu$ to the distribution $\nu$ in a manner which is optimal with respect to certain quantities of interest like energy or cost. Such questions constitute the research area of optimal transport on discrete domains \cite{JS, SRGB}, a topic which has applications in a number of applied areas such as computer graphics and image processing \cite{GBL, FFS, LROY}, geometry \cite{MQO}, and physics \cite{BB}. One classical approach toward these problems is by way of Kantorovich duality, which in the formulation presented here relates Lipschitz-type and Arens-Eells function spaces via duality.\par
One setting where discrete transport problems have, to this author's knowledge, not been posed is magnetic (or signed) graphs. These are essentially combinatorial graphs which have been equipped with an additional structure known as a signature, which can be viewed as a discrete analogue of a magnetic potential field \cite{LL}. These graphs have helped researchers model systems from discrete quantum mechanics \cite{LL} and chemistry \cite{FCSLP}, to even social psychology \cite{CH}. In the classical theory of optimal transport on discrete spaces, there happens to be a well-understood link between the cost of transport along paths and their associated lengths. Interestingly, it appears as though one natural extension of this relationship to the case of magnetic graphs appears to fail, which we will explore in the last section. This has complicated the computation of quantities associated to magnetic transport processes. \par
In this paper, we will approach optimal transport through adapted Lipschitz- and Arens-Eells-type function spaces designed for magnetic graphs. After some preliminary remarks, we will establish the duality of these spaces using a form of representation in the manner of \cite{NW}. Then, we will prove a characterization of the extreme points of the unit ball in our (magnetic) Lipschitz space. Finally, we will put down a result concerning the central problem of computing the $\sigma$-Arens-Eells norm via a compression mapping.\\

\subsection{Graph theory preliminaries}

\begin{wrapfigure}{R}{.275\textwidth}
    \begin{minipage}{\linewidth}
        \centering\captionsetup[subfigure]{justification=centering}
        \includegraphics[width=\linewidth]{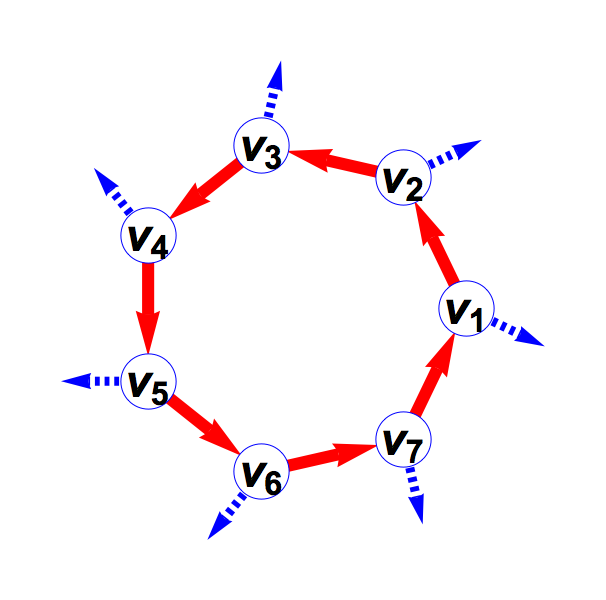}
            \subcaption{7-cycle, with signature illustrated by the angular offset of the blue arrows from the red ones}
            \label{fig:5a}\par\vfill
        \includegraphics[width=\linewidth]{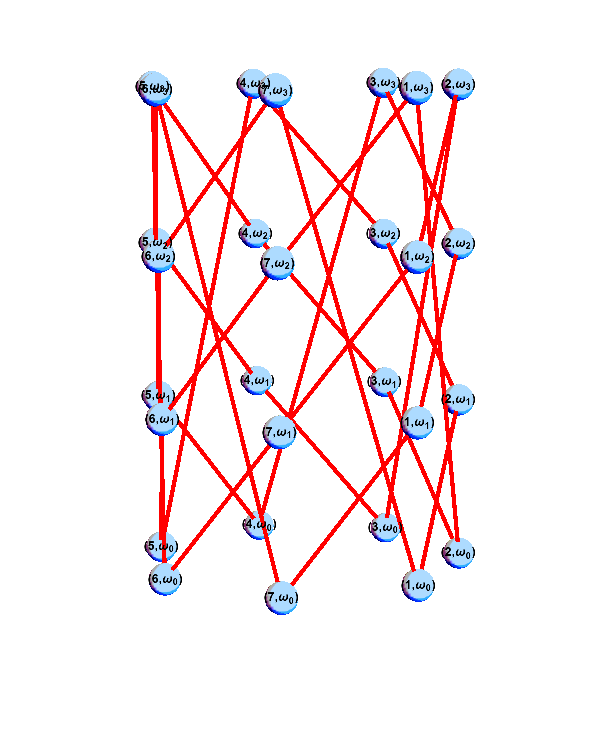}
            \subcaption{lift of the above cycle; this graph is isomorphic to a cycle on 28 vertices}
            \label{fig:5b}
    \end{minipage}
\caption{A magnetic cycle graph and its lift.}\label{fig:graph-fig}
\end{wrapfigure}

Throughout, $\mathbf{S}^1=\{z\in\C:|z|=1\}$ is the unit circle, and $\mathbf{S}^1_p=\{z\in\C:z^p=1\}$ is the abelian group of $p$-th roots of unity, where $p\in\mathbb{N}$. All graphs considered here are considered to be simple; that is, undirected, with a finite vertex set, no loops, and no multiple edges. If $u,v$ are vertices, adjacency is indicated $u\sim v$. A graph is connected if there exists a path connecting any two of the vertices in the graph.\par
If $X=(V(X),E(X))$ is a simple graph, we define the \textit{oriented edges} of $X$ to be the set $E^{\text{or}}(X):=\big\{(u,v),\hspace{0.1cm}(v,u):\{u,v\}\in E(X)\big\}$.\par
A \textit{signature} on $X$ is a map $\sigma:E^{\text{or}}(X)\rightarrow \mathbf{S}^1:(u,v)\mapsto\sigma_{uv}$, satisfying $\sigma_{vu}=\overline{\sigma_{uv}}$. A \textit{magnetic graph} is a pair $(X,\sigma)$. Throughout, (non)-magnetic graphs will be denoted with an (`$G$') `$X$' respectively. The trivial signature is defined to be 1 on every oriented edge. A magnetic graph $(X,\sigma)$ is called \textit{balanced} provided that the product of the values of the signature along any (directed) cycle is 1; otherwise, $X$ is \textit{unbalanced}. If $\sigma$ takes values in a finite subgroup of $\mathbf{S}^1$ and $\tau:V(X)\rightarrow\mathbf{S}^1_p$ is some function, then we may produce the $\tau$-\textit{switched signature} denoted $\sigma^\tau$ via
    \begin{equation}\label{switchedsig}
        \hspace{-4cm}\sigma^\tau_{uv}:=\tau(u)\sigma_{uv}\tau(v)^{-1}.
    \end{equation}
Two distinct signatures related in this manner by some switching function are called \textit{switching equivalent}. A signature $\sigma$ is balanced if and only if it is switching equivalent to the trivial signature \cite[Proposition 3.2]{LCL}. \par
Given a magnetic graph $(X,\sigma)$ whose signature takes values in some finite group $\mathbf{S}^1_p$ we may construct a related non-magnetic graph called the \textit{lift of} $X$, denoted $\widehat{X}$, via vertex set $V(\widehat{X})=V(X)\times \mathbf{S}^1_p$, and with the condition that two vertices $(u,\sigma_1),(v,\sigma_2)$ are adjacent if and only if $u\sim v$ in the original graph and $\sigma_2=\sigma_1\sigma_{uv}$. The signature structure from the original graph is thus encoded in the edge structure of the new one, illustrated in Figure \ref{fig:graph-fig}.\par
We will also have occasion to utilize the Hilbert space $\ell_2(V(X)):=\{f:V(X)\rightarrow\C\}$ with inner product structure given by
    \[\hspace{-4cm}\ip{f,g}_{\ell_2}:=\sum_{u\in V(X)}f(u)\overline{g(u)}.\]
Also, we will use the unit distributions $\delta_u\in\ell_2(X)$ given by
    \[\hspace{-4cm}\delta_u(v):=\begin{cases}
     0 & v\neq u  \\
     1 & v=u       \\
    \end{cases}.\]
    
\subsection{Classical Kantorovich duality, extreme points}
To complete this preliminary section, let us recall some results pertaining to non-magnetic graphs. If $\mu,\nu$ are two mass distributions on the vertices of a connected graph $G$ equipped with shortest-path metric $d$, we consider transport plans $\gamma:V(G)\times V(G)\rightarrow\R_{\geq 0}$ which are mass distributions on the Cartesian product of the vertex set whose marginals agree with $\mu$ and $\nu$, and such that $\gamma(u,v)$ represents the amount of mass transported from vertex $u$ to vertex $v$. $\Gamma(\mu,\nu)$ represents the set of all transport plans between $\mu$ and $\nu$. The \textit{1-Wasserstein distance} between $\mu,\nu$ is then given by
    \[W_1(\mu,\nu)=\inf_{\gamma\in\Gamma(\mu,\nu)}\sum_{u,v}\gamma(u,v)d(u,v).\]
If one chooses an arbitrary but fixed base vertex, say $u_0\in V(G)$, one can define the normed space
    \[\Lipp_0(G):=\left\{f:V(G)\rightarrow\R\setbar f(u_0)=0\right\}\]
where 
\begin{equation}\label{lip0norm}
\norm{f}_{\Lipp}=\max_{u\sim v}|f(u)-f(v)|.
\end{equation}
Similarly, for each pair of vertices $u,v\in V(G)$ we may define the \textit{combinatorial atom} $m_{uv}:V(G)\rightarrow\R$ defined by
    \[m_{uv}(w)=\begin{cases}
     1  &  w=u \\
     -1  & w=v \\
     0   &\text{otherwise}\\
    \end{cases}\text{ for each }w\in V(G).\]
Subsequently, we may construct the \textit{Arens-Eells} space via
    \[\AEc(G):=\text{span}_{\R}{\{m_{uv}\setbar u,v\in V(G),\hspace{0.25cm} u\sim v\}}\]
equipped with the norm
    \[\norm{m}_\AEc:=\inf\Big{\{}\sum_{i=1}^n|a_i|\setbar m=\sum_{i=1}^n a_im_{u_iv_i},\hspace{0.1cm}\{a_i\}_i\subset\R,\hspace{0.15cm}u_i\sim v_i\Big{\}}.\]
Viewing $\AEc(G)$ and $\Lipp_0(G)$ as subspaces of the Hilbert space $\ell_2(G)$, one can prove via Riesz representation that $\AEc(G)$ is isometrically isomorphic to $\Lipp_0(G)^\ast$, e.g. \cite[Theorem 2.2.2]{NW}. This is the so-called (classical) Kantorovich duality to which we dedicate a good part of the sequel.
\section{Duality and extreme points}

\subsection{Arens-Eells, signed Lipschitz spaces}
Let $(X,\sigma)$ be a magnetic graph, endowed with usual shortest-path metric $d$. We define the signed Lipschitz function space
    \[\lipsig(X):=\{f:V(X)\rightarrow\C\hspace{0.2cm}\big|\hspace{0.2cm}\exists \hspace{0.1cm}C\geq 0\text{ s.t. }|f(u)-\sigma_{uv}f(v)|\leq C\text{ for each }u\sim v\}\]
This definition leads to a natural choice of  $\sigma$-Lipschitz norm, which we pair with an equivalent formulation. For each $f\in\lipsig(X)$, set
    \begin{equation}\label{lipsignorm}\begin{split}
    \norm{f}_\lipsig&:=\inf\{C\geq 0\hspace{0.1cm}\big|\hspace{0.2cm}|f(u)-\sigma_{uv}f(v)|\leq C\text{ for each }u\sim v\}\\
    &=\max_{u\sim v}|f(u)-\sigma_{uv}f(v)|.
    \end{split}\end{equation}
    \begin{lemma}
        Let $(X,\sigma)$ be an unbalanced magnetic graph. Then $\norm{\cdot}_\lipsig$ is a norm.
    \end{lemma}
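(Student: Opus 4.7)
The plan is to verify the three norm axioms separately, relegating the real work to positive definiteness, since this is the only place the unbalanced hypothesis enters.

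Absolute homogeneity and the triangle inequality are immediate from (\ref{lipsignorm}) and the corresponding properties of $|\cdot|$ on $\mathbb{C}$: for any $c\in\mathbb{C}$, $|cf(u)-\sigma_{uv}cf(v)|=|c|\,|f(u)-\sigma_{uv}f(v)|$, and $|(f+g)(u)-\sigma_{uv}(f+g)(v)|\leq|f(u)-\sigma_{uv}f(v)|+|g(u)-\sigma_{uv}g(v)|$; taking maxima over $u\sim v$ gives $\|cf\|_\lipsig=|c|\|f\|_\lipsig$ and $\|f+g\|_\lipsig\leq\|f\|_\lipsig+\|g\|_\lipsig$. Clearly $\|0\|_\lipsig=0$ as well, so the only non-trivial axiom is that $\|f\|_\lipsig=0$ forces $f\equiv 0$.

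To prove this, I would first transfer the vanishing of the norm into a multiplicative statement along walks. Suppose $\|f\|_\lipsig=0$, so $f(u)=\sigma_{uv}f(v)$ for every $u\sim v$. A straightforward induction on length shows that for any walk $u_0,u_1,\ldots,u_n$ in $X$,
\[
f(u_0)=\Big(\prod_{i=0}^{n-1}\sigma_{u_iu_{i+1}}\Big)f(u_n).
\]
Now exploit unbalancedness: by definition there exists a (directed) cycle $v_0,v_1,\ldots,v_n=v_0$ whose signature product $\pi:=\prod_{i=0}^{n-1}\sigma_{v_iv_{i+1}}$ is not $1$. Applying the walk identity to this closed walk yields $f(v_0)=\pi f(v_0)$, so $(1-\pi)f(v_0)=0$, and since $\pi\neq 1$ we conclude $f(v_0)=0$.

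The final step is to promote this single vanishing to $f\equiv 0$, using connectedness of $X$ (implicit in the shortest-path metric being well-defined). For an arbitrary $w\in V(X)$, fix a walk $w=u_0,u_1,\ldots,u_m=v_0$ from $w$ to $v_0$ and a reverse walk $v_0,u_{m-1},\ldots,u_0=w$ back. Concatenating the forward walk, the unbalanced cycle at $v_0$, and the reverse walk produces a closed walk based at $w$ whose signature product is $\alpha\cdot\pi\cdot\bar{\alpha}=\pi$ (where $\alpha=\prod_i\sigma_{u_iu_{i+1}}$ and we use $\sigma_{vu}=\overline{\sigma_{uv}}$ together with commutativity of $\mathbf{S}^1$). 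The same argument as above then gives $(1-\pi)f(w)=0$, hence $f(w)=0$. Thus $f\equiv 0$ and $\|\cdot\|_\lipsig$ is a norm. The only real obstacle is ensuring the closed walk through an arbitrary vertex inherits a non-trivial signature product, which the commutativity of $\mathbf{S}^1$ handles cleanly.
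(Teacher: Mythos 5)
Your proof is correct, and its overall shape matches the paper's: the homogeneity and triangle-inequality checks are identical, and both arguments reduce definiteness to showing that $f(u)=\sigma_{uv}f(v)$ for all $u\sim v$ forces $f\equiv 0$ when $(X,\sigma)$ is unbalanced. Where you diverge is in how that last implication is established. The paper argues that a nonzero such $f$ must be a nonzero scalar multiple of a unimodular function $f_p$ whose conjugate is a switching function trivializing $\sigma$, and then invokes the characterization that a signature is balanced if and only if it is switching equivalent to the trivial one to reach a contradiction. You instead work directly with products along walks: the identity $f(u_0)=\big(\prod_i\sigma_{u_iu_{i+1}}\big)f(u_n)$ together with an unbalanced cycle forces $f$ to vanish at a vertex of that cycle, and connectedness propagates the vanishing everywhere. (In fact your final concatenation step is unnecessary: once $f(v_0)=0$, the walk identity already gives $f(w)=\alpha f(v_0)=0$ for every $w$.) Your route is more elementary and self-contained, avoiding the cited switching-equivalence result, and it makes explicit the role of connectedness, which the paper's phrasing about ``each of the connected components'' leaves somewhat implicit; the paper's version is shorter because it outsources exactly this walk-product bookkeeping to the balance characterization.
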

    \begin{proof}
        Let $f,g\in\lipsig(X)$ and $\alpha\in\C$. Clearly $\norm{\cdot}_\lipsig\geq 0$, and $\norm{\alpha f}_\lipsig=|\alpha|\norm{f}_\lipsig$ from the definition. The triangle inequality is obtained as follows:
            \[\begin{split}
            \norm{f+g}_\lipsig&=\max_{u\sim v}|(f+g)(u)-\sigma_{uv}(f+g)(v)|\leq \max_{u\sim v}|f(u)-\sigma_{uv}f(v)|+|g(u)-\sigma_{uv}g(v)| \\
            &=\max_{u\sim v}|f(u)-\sigma_{uv}f(v)|+\max_{u\sim v}|g(u)-\sigma_{uv}g(v)|=\norm{f}_\lipsig+\norm{g}_\lipsig.
            \end{split}\]
        For definiteness, let us assume that $\norm{f}_\lipsig=0$. The max formulation in equation \eqref{lipsignorm} would imply that for each pair of adjacent vertices $u,v$ one has $f(u)=\sigma_{uv}f(v)$, forcing either $f\equiv 0$ or $f=\lambda f_p$, where $|\lambda|>0$, $|f_p|\equiv 1$, and $\overline{f_p}$ is a switching function for each of the connected components of $(X,\sigma)$, such that $\sigma^{f_p}\equiv 1$ as in equation \eqref{switchedsig}. The latter case contradicts the assumption that $(X,\sigma)$ is unbalanced so $f\equiv 0$.
    \end{proof}
In the case where $(X,\sigma)$ is a balanced graph, $\norm{\cdot}_{\lipsig}$ is a semi-norm since its definiteness cannot be assured. Let us now consider two adjacent vertices $u,v\in V(X)$ and define the \textit{magnetic atom} $m^\sigma_{uv}:V(X)\rightarrow\C$ as follows:
    \[m^\sigma_{uv}(w):=
    \begin{array}{cc}
         \Bigg\{ & 
            \begin{array}{cc}
             1 & w=u \\
             -\sigma_{uv} & w=v \\
             0 & \text{otherwise} \\
            \end{array}. \\
    \end{array}\]
We define the magnetic Arens-Eells space   
    \[\AEsig(X):=\text{span}_\C\{m^\sigma_{uv}\setbar u,v\in V(X),\hspace{0.25cm} u\sim v\}.\]
Elements of this space will be called \textit{magnetic molecules}. We will use the next lemma to verify that $\AEsig(X)$ indeed recovers all of $\ell_2(X)$ under the right condition.
    \begin{lemma}
    Let $(X,\sigma)$ be an unbalanced magnetic graph. Then $\AEsig(X)=\ell_2(X)$.
    \end{lemma}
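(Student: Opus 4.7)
The plan is to work inside the finite-dimensional complex Hilbert space $\ell_2(X)$ and reduce the claim to showing that $\AEsig(X)^\perp = \{0\}$; from this the conclusion $\AEsig(X)=\ell_2(X)$ follows immediately. The first step is to unpack the orthogonality condition directly:
\[
\ip{f,m^\sigma_{uv}}_{\ell_2} \;=\; f(u)\cdot\overline{1} \;+\; f(v)\cdot\overline{-\sigma_{uv}} \;=\; f(u)-\overline{\sigma_{uv}}\,f(v),
\]
so $f\perp m^\sigma_{uv}$ iff $f(u)=\overline{\sigma_{uv}}\,f(v)$. A one-line computation shows $m^\sigma_{vu} = -\overline{\sigma_{uv}}\,m^\sigma_{uv}$, so it is enough to impose this condition for one orientation of each undirected edge, meaning $f\in\AEsig(X)^\perp$ iff $f(u)=\overline{\sigma_{uv}}\,f(v)$ for every $u\sim v$.

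The key step is then the conjugation trick: taking complex conjugates of the identity $f(u)=\overline{\sigma_{uv}}\,f(v)$ gives $\overline{f(u)} - \sigma_{uv}\,\overline{f(v)} = 0$ for every adjacent pair $u\sim v$, which is precisely the statement that $\|\overline{f}\|_\lipsig=0$. Since $(X,\sigma)$ is unbalanced, the preceding lemma forces $\overline{f}\equiv 0$, and hence $f\equiv 0$. Consequently $\AEsig(X)^\perp = \{0\}$, and therefore $\AEsig(X)=\ell_2(X)$.

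I do not expect a serious obstacle here beyond careful bookkeeping of the complex conjugation in the sesquilinear $\ell_2$ pairing and the observation that after conjugation the orthogonality condition lines up exactly with the kernel of $\|\cdot\|_\lipsig$. Once that alignment is spotted, the previous lemma does all the work. An alternative route would be to exhibit each $\delta_u$ as an explicit combination of magnetic atoms by routing a spanning-tree-plus-unbalanced-cycle path from $u$, but this is considerably more laborious and offers no new information, so I would stick with the orthogonal-complement argument.
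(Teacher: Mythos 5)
Your proof is correct and follows essentially the same route as the paper: both arguments show that $\AEsig(X)^\perp=\{0\}$ by observing that orthogonality to every magnetic atom forces the switching-function condition, which unbalancedness rules out. The only difference is bookkeeping: the paper pairs $f$ against $\overline{m^\sigma_{uv}}$ so the condition $f(u)=\sigma_{uv}f(v)$ appears directly, whereas you pair against $m^\sigma_{uv}$ and conjugate at the end to invoke the definiteness of $\norm{\cdot}_\lipsig$ from the previous lemma --- the same underlying argument.
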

    \begin{proof}
    We will prove this lemma by showing that the orthogonal complement of $\AEsig(X)$ in $\ell_2(X)$ is merely $\{0\}$. Let $f\in\ell_2(X)$ be such that $\ip{f,\overline{m}}_{\ell_2}=0$ for each $m\in\AEsig(X)$. In particular, for each pair of adjacent vertices $u,v\in V(X)$ one has $\ip{f,\overline{m^\sigma_{uv}}}_{\ell_2}=0$. Explicitly, this means $f(u)=\sigma_{uv}f(v)$ forcing either $f\equiv 0$ or $f=\lambda f_p$, where $|\lambda|>0$ and $|f_p|\equiv 1$ is a switching function for each of the connected components of $(X,\sigma)$, such that $\sigma^{f_p}\equiv 1$ as in equation \eqref{switchedsig}. The latter case contradicts the assumption that $(X,\sigma)$ is unbalanced so $f\equiv 0$.
    \end{proof}
We define for each molecule $m\in\AEsig(X)$ the norm
    \[\norm{m}_\AEsig:=\inf\bigg\{\sum_{i=1}^n|a_i|\setbar m=\sum_{i=1}^na_im^\sigma_{u_iv_i},\hspace{0.1cm}\{a_i\}_i\subset\C ,\hspace{0.1cm} u_i\sim v_i\in V(X) \bigg\}.\]
Let us quickly check that this is actually a norm as claimed.
    \begin{lemma}\label{aesignorm}
    Let $(X,\sigma)$ be an unbalanced magnetic graph. Then $\norm{\cdot}_\AEsig$ is a norm.
    \end{lemma}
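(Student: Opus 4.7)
The plan is to verify nonnegativity, positive homogeneity, and the triangle inequality by routine manipulation of the defining infimum, then establish definiteness via a pairing inequality with $\lipsig(X)$. Nonnegativity is immediate from the definition. For homogeneity, note that if $m=\sum_i a_im^\sigma_{u_iv_i}$, then $\alpha m=\sum_i(\alpha a_i)m^\sigma_{u_iv_i}$, so taking infima in both directions gives $\norm{\alpha m}_\AEsig=|\alpha|\norm{m}_\AEsig$ for $\alpha\neq 0$ (the $\alpha=0$ case being trivial). For the triangle inequality, fix $\epsilon>0$, select representations of $m$ and $m'$ whose coefficient-sums are within $\epsilon$ of $\norm{m}_\AEsig$ and $\norm{m'}_\AEsig$, concatenate them into a representation of $m+m'$, and let $\epsilon\rightarrow 0$. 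The preceding lemma is invoked implicitly throughout: since $\AEsig(X)=\ell_2(X)$, every element of the space does admit at least one such representation, so each infimum is over a nonempty set and is finite.

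The main step is definiteness, where I would establish the pairing bound
\[|\ip{f,\overline{m}}_{\ell_2}|\leq \norm{f}_\lipsig\cdot\norm{m}_\AEsig\qquad\text{for each }f\in\lipsig(X),\ m\in\AEsig(X).\]
The key observation is the direct computation $\ip{f,\overline{m^\sigma_{uv}}}_{\ell_2}=f(u)-\sigma_{uv}f(v)$, which follows from the explicit definition of $m^\sigma_{uv}$ together with the formula for the Hilbert inner product. Given any representation $m=\sum_i a_im^\sigma_{u_iv_i}$, linearity in the first argument (and conjugate-linearity in the second) together with the triangle inequality yield
\[|\ip{f,\overline{m}}_{\ell_2}|\leq \sum_i|a_i|\cdot|f(u_i)-\sigma_{u_iv_i}f(v_i)|\leq \norm{f}_\lipsig\sum_i|a_i|,\]
and taking the infimum over representations produces the pairing bound.

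To conclude, assume $\norm{m}_\AEsig=0$; the pairing bound then forces $\ip{f,\overline{m}}_{\ell_2}=0$ for every $f\in\lipsig(X)$. Because $V(X)$ is finite, every element of $\ell_2(X)$ has finite magnetic Lipschitz seminorm, so this equality in fact holds for all $f\in\ell_2(X)$. Specializing to $f=\overline{m}$ yields $\norm{m}_{\ell_2}^2=0$, forcing $m\equiv 0$. I expect the definiteness step to be the main obstacle: the pairing bound above is essentially the ``easy half'' of the Kantorovich duality pursued elsewhere in the paper, and unbalancedness enters only indirectly through the preceding lemma, which ensures $\AEsig(X)=\ell_2(X)$ so that $\overline{m}$ is available as a legitimate test function in the final step.
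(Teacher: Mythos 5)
Your proof is correct, but the definiteness step takes a different route from the paper's. The paper argues directly in $\ell_2$: given $\norm{m}_{\AEsig}=0$, it picks representations $m=\sum_i a_i^k m^\sigma_{u_i^kv_i^k}$ with $\sum_i|a_i^k|<1/k$ and uses the crude bound $\norm{m^\sigma_{uv}}_{\ell_2}\leq 2$ to get $\norm{m}_{\ell_2}\leq 2\sum_i|a_i^k|\rightarrow 0$, so $m=0$ by definiteness of the $\ell_2$-norm. You instead prove the pairing inequality $|\ip{f,\overline{m}}_{\ell_2}|\leq\norm{f}_{\lipsig}\norm{m}_{\AEsig}$ --- which is exactly the estimate the paper later uses to show the operator $T_2$ in Theorem \ref{dualitysigma} is nonexpansive --- and then test against $f=\overline{m}$. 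The two arguments are morally equivalent (applying your pairing bound with $f=\overline{m}$ and noting $\norm{\overline{m}}_{\lipsig}\leq 2\norm{m}_{\ell_2}$ recovers essentially the paper's inequality), but yours front-loads half of the duality theorem, which is a reasonable economy. Two small quibbles: the nonemptiness of the set defining the infimum follows from the definition of $\AEsig(X)$ as a span of atoms, not from the lemma asserting $\AEsig(X)=\ell_2(X)$, so your implicit invocation of that lemma is unnecessary; and, relatedly, your closing remark that unbalancedness enters ``so that $\overline{m}$ is available as a test function'' is off the mark --- $\overline{m}$ lies in $\lipsig(X)$ simply because the graph is finite, and in fact neither your proof nor the paper's actually uses the unbalancedness hypothesis for this particular lemma.
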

    \begin{proof}
    The positivity, homogeneity, and triangle inequality for this norm are all easily checked. We only need argue for why $\norm{\cdot}_\AEsig$ is in fact definite. Suppose for some molecule one has $\norm{m}_\AEsig=0$. For positive integer $k$, find some finite linear combination of atoms $\sum_i a_i^km^\sigma_{u_i^kv_i^k}$ for which
        \[m=\sum_i a_i^km^\sigma_{u_i^kv_i^k},\hspace{0.25cm}\sum_i|a_i^k|<\frac{1}{k}.\]
    Then,
        \[\norm{m}_{\ell_2}=\norm{\sum_i a_i^km^\sigma_{u_i^kv_i^k}}_{\ell_2}\leq \sum_i |a_i^k|\norm{m^\sigma_{u_i^kv_i^k}}_{\ell_2}\\
        \leq 2\sum_i|a_i^k|<\frac{2}{k}\rightarrow 0\text{ as }k\rightarrow\infty.\]
    From the definiteness of the $\norm{\cdot}_{\ell_2}$ norm, the claim is verified.
    \end{proof}
We have constructed two Banach function spaces for unbalanced magnetic graphs, $\lipsig(X)$ and $\AEsig(X)$ (we verified their structures as normed spaces, completeness follows from their finite dimension). In Theorem \ref{dualitysigma}, we identify them as dual to one another. \par

\subsection{Duality}
We will now adapt the classical duality result mentioned in the preliminary discussion to the function spaces designed for magnetic graphs. The argument is in the manner of Weaver \cite{NW}.

\begin{theorem}\label{dualitysigma}
    Let $(X,\sigma)$ be an unbalanced magnetic graph. Then $\AEsig(X)^\ast$ is isometrically isomorphic to $\lipsig(X)$.
    \end{theorem}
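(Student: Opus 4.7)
The plan is to follow the classical Kantorovich-duality template from \cite{NW}, adapted to the signed setting via the natural pairing given by summation against magnetic atoms, and then to leverage the preceding lemma identifying $\AEsig(X)$ with $\ell_2(V(X))$ to deduce surjectivity from Riesz representation in finite dimension.

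First I would introduce the candidate isomorphism $T:\lipsig(X)\to \AEsig(X)^\ast$ defined on magnetic molecules by
\[T(f)(m):=\sum_{w\in V(X)}f(w)m(w),\]
and observe from the definition of $m^\sigma_{uv}$ that $T(f)(m^\sigma_{uv})=f(u)-\sigma_{uv}f(v)$; linearity of $T$ in $f$ is immediate.

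Second, I would establish the two-sided norm equality $\|T(f)\|_{\mathrm{op}}=\norm{f}_\lipsig$. For the upper bound, fix any representation $m=\sum_i a_i m^\sigma_{u_iv_i}$; then $|T(f)(m)|\leq\sum_i|a_i|\,|f(u_i)-\sigma_{u_iv_i}f(v_i)|\leq \norm{f}_\lipsig\sum_i|a_i|$, and infimizing over representations of $m$ yields $|T(f)(m)|\leq \norm{f}_\lipsig\norm{m}_\AEsig$. For the matching lower bound, one has $\norm{m^\sigma_{uv}}_\AEsig\leq 1$ from the one-term representation $1\cdot m^\sigma_{uv}$, so evaluating $T(f)$ on this family together with \eqref{lipsignorm} gives $\|T(f)\|_{\mathrm{op}}\geq \max_{u\sim v}|f(u)-\sigma_{uv}f(v)|=\norm{f}_\lipsig$. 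Together, $T$ is an isometric linear embedding.

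The substantive step is surjectivity. Here I would invoke the preceding lemma to identify $\AEsig(X)=\ell_2(V(X))$ as vector spaces. Given any $\phi\in\AEsig(X)^\ast$, finite-dimensionality makes $\phi$ automatically continuous with respect to the $\ell_2$-norm as well, so Riesz representation furnishes a unique $g\in\ell_2(V(X))$ with $\phi(x)=\ip{x,g}_{\ell_2}$ for all $x$. Setting $f:=\overline{g}$ (pointwise complex conjugate), one verifies $T(f)(m^\sigma_{uv})=f(u)-\sigma_{uv}f(v)=\phi(m^\sigma_{uv})$ for each adjacent pair $u\sim v$, so by linearity $T(f)=\phi$ on all of $\AEsig(X)$. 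Membership $f\in\lipsig(X)$ then falls out for free, since $|f(u)-\sigma_{uv}f(v)|=|\phi(m^\sigma_{uv})|\leq \|\phi\|_{\mathrm{op}}\norm{m^\sigma_{uv}}_\AEsig\leq\|\phi\|_{\mathrm{op}}$ for each $u\sim v$. The only real obstacle I anticipate is keeping the conjugation conventions straight between the unconjugated pairing defining $T$ and the sesquilinear $\ell_2$ inner product; this is precisely why the conjugate $\overline{g}$, rather than $g$ itself, is the object that lands in $\lipsig(X)$. Everything else is essentially bookkeeping once the lemma identifying $\AEsig(X)$ with $\ell_2(V(X))$ is on the table.
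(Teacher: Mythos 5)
Your proposal is correct and follows essentially the same route as the paper: the pairing $T(f)(m)=\sum_w f(w)m(w)$ is exactly the paper's $\ip{f,\overline{m}}_{\ell_2}$, the upper bound by infimizing over representations and the lower bound by testing against single atoms match the paper's two nonexpansiveness estimates, and surjectivity via the lemma $\AEsig(X)=\ell_2(X)$ plus finite-dimensional Riesz representation is precisely how the paper constructs the inverse map. The only difference is organizational (you build one isometric embedding and prove it onto, while the paper defines both directions and checks the composition is the identity), and your remark about conjugation conventions is a fair point of care that the paper handles implicitly.
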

    \begin{proof}
    Let us define a linear mapping $T_1:\AEsig(X)^\ast\rightarrow\lipsig(X)$ in the following manner. Let $M\in\AEsig(X)^\ast$, and notice that since $\AEsig(X)=\ell_2(X)$ as finite dimensional vector spaces, $M$ may be viewed as a continuous linear functional on $\ell_2(X)$. In turn, by using the finite-dimensional Riesz representation theorem on the space $\ell_2(X)$, we may obtain a representative function $f_M\in\ell_2(X)$ so that for each $m\in\AEsig(X)$, one has $M(m)=\ip{f_M,\overline{m}}_{\ell_2}$. Put $T_1(M)=f_M$. Notice that for each pair of adjacent vertices $u,v\in V(X)$ we have
        \[\begin{split}
        |f_M(u)-\sigma_{uv}f_M(v)|&=|\ip{f_M,\overline{m^\sigma_{vu}}}_{\ell_2}|\\
        &=|M(m^\sigma_{vu})|\leq \norm{M}_{\AEsig^\ast}\norm{m^\sigma_{vu}}_\AEsig\leq \norm{M}_{\AEsig^\ast}.
        \end{split}\]
    In turn, by taking a max,
        \[\norm{f_M}_\lipsig=\max_{u\sim v}|f_M(u)-\sigma_{uv}f_M(v)|\leq \norm{M}_{\AEsig^*}\]
    which implies that $T_1$ is a nonexpansive operator. As a note, the linearity of $T_1$ is inherited from the Riesz Representation. Let us now suggestively define a mapping $$T_2:\lipsig(X)\rightarrow\AEsig(X)^\ast:f\mapsto M_f,$$ where for each $m\in\AEsig(X)$ we set $M_f(m)=\ip{f,\overline{m}}_{\ell_2}$. We verify that given any $m\in\AEsig(X)$, realized as a finite linear combination $\sum_i a_i m^\sigma_{u_iv_i}$, it holds that
        \[\begin{split}
            |M_f(m)|&=|\ip{f,\overline{m}}_{\ell_2}|=\big|\ip{f,\sum_i\overline{a_i m^\sigma_{u_iv_i}}}_{\ell_2}\big|\\
            &\leq\sum_i|a_i|\cdot|\ip{f,\overline{m^\sigma_{u_iv_i}}}_{\ell_2}|=\sum_i|a_i|\cdot|f(u)-\sigma_{uv}f(v)|\\
            &\leq \norm{f}_\lipsig\cdot\sum_i|a_i|.
        \end{split}\]
    By taking an inf over all possible representatons of $m$, we obtain the inequality $|M_f(m)|\leq  \norm{f}_\lipsig\cdot\norm{m}_\AEsig$, which implies $\norm{M_f}_{\AEsig(X)^\ast}\leq \norm{f}_\lipsig$, showing that $T_2$ is a non-expansive linear operator as well. The composition $T_2T_1:\AEsig(X)^\ast\rightarrow\AEsig(X)^\ast$ is easily checked to be the identity mapping. Since the mapping $T_1$ and its inverse are nonexpansive and invertible, $T_1$ is a vector space isomorphism and an isometry of Banach spaces, finalizing the claim.
    \end{proof}

\subsection{Extreme points}
Some contextual remarks are in order before presenting the result. 
    \begin{definition}
       Let $(W,\norm{\cdot}_W)$ be a normed space, and suppose $f\in W$, with $\norm{f}_W\leq 1$. Then $f$ is called an \textit{extreme point} of the unit ball in $W$, denoted $B_W$, provided that for any $g\in W$, if 
            \[\left\{f+tg\setbar t\in[-1,1]\right\}\subset B_W,\]
        then $g= 0$.
    \end{definition}
We note that this definition of extremity is equivalent to the more classical interpretation, which defines points in the unit ball to be extreme if they cannot be expressed as the midpoint of two other, distinct elements of the unit ball.
    \begin{definition}
        Let $G$ be simple graph and $f\in\Lipp_0(G)$ be an element of the unit ball of $\Lipp_0(G)$, denoted $B_\Lipp$. We say an edge $\{u,v\}\in E(X)$ is \textit{satisfied} by $f$ if
            \[|f(u)-f(v)|=1.\]
    \end{definition}
Farmer \cite[Theorem 1]{JF} proves an equivalent version of the following result.
    \begin{theorem}[Farmer 1994]
        Let $G$ be a connected simple graph and $f\in B_\Lipp$. Then $f$ is an extreme point of $B_\Lipp$ if and only if the graph $H_f$, constructed with vertex set $V(G)$ and edge set
            \[E(H_f):=\left\{\{u,v\}\in E(G)\setbar \{u,v\}\text{ is satisfied by }f\right\}\]
        is connected.
    \end{theorem}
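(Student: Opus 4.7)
The plan is to prove the biconditional as two separate implications, using the contrapositive for the necessity direction. Throughout, I would exploit the fact that $\norm{\cdot}_\Lipp$ is a maximum over edges, so membership in $B_\Lipp$ is determined edge-by-edge, and a perturbation $g$ leaves $f$ in $B_\Lipp$ iff it satisfies a family of Lipschitz-like inequalities indexed by $E(G)$.

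For the necessity direction, suppose $H_f$ is disconnected. I would pick a connected component $C \subsetneq V(G)$ of $H_f$ not containing $u_0$ (which exists since $H_f$ is a spanning subgraph with at least two components) and set $g$ to be the indicator function of $C$; then $g(u_0)=0$, $g \not\equiv 0$, and $g$ is constant on each component of $H_f$. For every satisfied edge $\{u,v\}$, $g(u)=g(v)$ and hence $|(f+tg)(u)-(f+tg)(v)| = |f(u)-f(v)| = 1$ for all $t$. For each edge $\{u,v\} \in E(G) \setminus E(H_f)$, one has $|f(u)-f(v)| < 1$; since $E(G)$ is finite, there is a uniform $\epsilon \in (0,1)$ with $|f(u)-f(v)| \leq 1 - \epsilon$ on all unsatisfied edges. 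Combined with the crude bound $|g(u)-g(v)| \leq 1$, this gives $\norm{f+tg}_\Lipp \leq 1$ whenever $|t| \leq \epsilon$. Rescaling $g$ by $\epsilon$ yields a nonzero function producing a full interval $\{f+tg : t \in [-1,1]\} \subset B_\Lipp$, contradicting extremity.

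For sufficiency, suppose $H_f$ is connected and $\{f+tg : t \in [-1,1]\} \subset B_\Lipp$. For each satisfied edge $\{u,v\}$, write $c := f(u)-f(v) \in \{-1,+1\}$ and $d := g(u)-g(v)$; evaluating the Lipschitz constraint at $t = \pm 1$ yields $|c+d| \leq 1$ and $|c-d| \leq 1$, which in $\R$ is easily seen to force $d = 0$. Thus $g$ is constant along every edge of $H_f$, and the spanning connectedness of $H_f$ propagates this constancy across all of $V(G)$. The normalization $g(u_0)=0$ then forces $g \equiv 0$, proving $f$ is extreme.

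The main conceptual obstacle lies in the necessity direction, namely in identifying the correct perturbation $g$: the key observation is that satisfied edges are precisely the edges which rigidly constrain any admissible perturbation, so the available degrees of freedom correspond exactly to the connected components of $H_f$. The use of a uniform slack $\epsilon$ on the unsatisfied edges, which requires finiteness of $G$, is essential to guarantee that $g$ can be scaled to work on the full interval $t \in [-1,1]$ rather than merely a neighborhood of $0$.
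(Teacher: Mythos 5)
Your proof is correct, and it follows essentially the same strategy the paper employs for its magnetic analogue of this result (the paper itself only cites Farmer for the classical statement rather than proving it): satisfied edges rigidly force the perturbation to be constant (resp.\ a switching multiple) across them, while a uniform slack $\epsilon$ on the unsatisfied edges permits a nonzero perturbation supported on a single component of $H_f$ when it is disconnected. No gaps; the $t=\pm1$ argument forcing $d=0$ and the handling of the empty-set case for $\epsilon$ are both sound.
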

We present an analogue of this result to the function spaces designed for magnetic graphs. First, one preliminary definition in the spirit of the preceding remarks.
    \begin{definition}
        Let $(X,\sigma)$ be an unbalanced magnetic graph, and suppose $f\in \lipsig(X)$ is in the unit ball of $\lipsig(X)$, denoted $B_\lipsig$. We say an edge $\{u,v\}\in E(X)$ is $\sigma$-\textit{satisfied} by $f$ if
            \[|f(u)-\sigma_{uv}f(v)|=1.\]
    \end{definition}
Note as before that the quantity $|f(u)-\sigma_{uv}f(v)|$ does not depend on the choice of orientation of the edge being evaluated, i.e. $|f(u)-\sigma_{uv}f(v)|=|-\sigma_{vu}f(u)+f(v)|$.
    \begin{theorem}
        Let $(X,\sigma)$ be an unbalanced graph, and $f\in B_\lipsig$. Then $f$ is an extreme point of $B_\lipsig$ if and only if the magnetic graph $H_f$ defined by the vertex set $V(X)$, the edge set
            \[E(H_f):=\left\{\{u,v\}\in E(X)\setbar \{u,v\}\text{ is $\sigma$-satisfied by }f\right\},\]
        and which we equip with the restriction of the signature structure $\sigma$ as on $X$, is unbalanced on each of its connected components.
    \end{theorem}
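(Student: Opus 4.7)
The plan is a two-part argument parallel to Farmer's classical theorem, leaning on the definiteness argument already developed in the lemma governing $\norm{\cdot}_\lipsig$. Throughout, let $H_f$ have connected components $C_1,\ldots,C_k$, equipped with the signature restricted from $\sigma$.

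For the ``if'' direction, suppose every $C_i$ is unbalanced and that $\{f+tg : t\in[-1,1]\}\subset B_\lipsig$; I want to conclude $g\equiv 0$. The local computation on a single $\sigma$-satisfied edge $\{u,v\}$ is the core: writing $a = f(u)-\sigma_{uv}f(v)$ (so $|a|=1$) and $b = g(u)-\sigma_{uv}g(v)$, the constraint $|a+tb|^2\leq 1$ expands to $2t\,\Re(a\bar b)+t^2|b|^2\leq 0$ for every $t\in[-1,1]$, which is only possible if $b=0$. Thus $g(u)=\sigma_{uv}g(v)$ on every edge of $H_f$; in other words, $g$ restricted to each $V(C_i)$ lies in $\lipsig(C_i)$ with zero $\sigma$-Lipschitz seminorm. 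Since each $C_i$ is unbalanced, the definiteness clause of the earlier lemma, applied componentwise, forces $g\equiv 0$ on each $V(C_i)$. The components partition $V(X)$, so $g\equiv 0$.

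For the contrapositive of the ``only if'' direction, suppose some component $C$ of $H_f$ is balanced (including the vacuous case of an isolated vertex). I will construct a nonzero $g$ realizing the line segment $\{f+tg : t\in[-1,1]\}\subset B_\lipsig$. Balance of $C$ provides a switching function $\tau : V(C)\to\mathbf{S}^1$ with $|\tau|\equiv 1$ and $\sigma^\tau\equiv 1$ on $C$, meaning $\sigma_{uv}=\overline{\tau(u)}\tau(v)$ on every edge of $H_f$ inside $C$. Define $g(u)=c\,\overline{\tau(u)}$ for $u\in V(C)$ and $g\equiv 0$ off $V(C)$, for a small real $c>0$. A direct computation shows $g(u)-\sigma_{uv}g(v)=0$ on every edge of $H_f$ inside $C$, so such edges contribute no perturbation. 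On the remaining edges of $X$ (all unsatisfied by $f$), finiteness of $E(X)$ yields a uniform gap $\varepsilon>0$ with $|f(u)-\sigma_{uv}f(v)|\leq 1-\varepsilon$, while $|g(u)-\sigma_{uv}g(v)|\leq 2|c|$ by the triangle inequality. Choosing $0<c<\varepsilon/2$ then ensures $|f(u)+tg(u)-\sigma_{uv}(f(v)+tg(v))|\leq 1$ for every edge and every $t\in[-1,1]$, and $g\not\equiv 0$ since $|\tau|\equiv 1$.

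The only step with genuine subtlety is the local extremity calculation on satisfied edges: because $|a|=1$ and the segment $\{a+tb : t\in[-1,1]\}$ must lie in the closed unit disk of $\C$ while passing through a boundary point at $t=0$, the segment must degenerate to a point and $b=0$. Everything downstream reduces to reapplying the definiteness argument from the $\norm{\cdot}_\lipsig$ lemma on each component, or to the elementary trick of shrinking the perturbation to respect the uniform gap on unsatisfied edges.
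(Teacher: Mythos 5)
Your proposal is correct and follows essentially the same two-part strategy as the paper: the local rigidity $g(u)=\sigma_{uv}g(v)$ on $\sigma$-satisfied edges followed by the componentwise unbalancedness/definiteness argument for one direction, and the switching-function perturbation $g=c\,\overline{\tau}$ on a balanced component with a uniform gap on unsatisfied edges for the other (your explicit expansion of $|a+tb|^2$ is in fact a cleaner justification than the paper's appeal to extremity of $1$ in the disk). One small slip to patch: the edges of $X$ outside $E(H_f)\cap E(C)$ are not all unsatisfied --- satisfied edges lying in other components of $H_f$ are among them --- so define $\varepsilon$ as the gap over the unsatisfied edges only, and observe that any satisfied edge outside $C$ has both endpoints where $g$ vanishes and hence sees no perturbation at all.
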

    \begin{proof}
        Let us begin with the converse by supposing that $H_f$ is unbalanced on each of its connected components, and that some $g\in \lipsig(X)$ satisfies
            \[\left\{f+tg\setbar t\in[-1,1]\right\}\subset B_\lipsig.\]
        This implies that for every edge $\{u,v\}\in E(X)$ $\sigma$-satisfied by $f$, it holds for every $t\in[-1,1]$
            \[|f(u)-\sigma_{uv}f(v)+t\left(g(u)-\sigma_{uv}g(v)\right)|\leq 1.\]
        Knowing that $|f(u)-\sigma_{uv}f(v)|=1$, and that 1 is an extreme point of the unit ball in $\C$, the only way that the inequality above can hold for every $t\in[-1,1]$ is if $|g(u)-\sigma_{uv}g(v)|=0$ at every $\sigma$-satisfied edge; that is, $g(u)=\sigma_{uv}g(v)$. As we have seen before, this would imply that on each connected component of $H_f$, $g$ must be either identically 0 or a scalar multiple of a switching function for $\sigma$ associated to the trivial signature. Since the latter implication contradicts our assumption that $(H_f,\sigma)$ is unbalanced on each of these connected components, it must hold that $g\equiv 0$, implying that $f$ is an extreme point for $B_\lipsig$.\par
        
        Next, let us assume that $(H_f,\sigma)$ has some balanced connected component; that is, there exists $A\subset V(X)$ such that the subgraph induced by $A$ and the existing satisfied edges between its vertices $E(A)$, and with the signature $\sigma$ restricted to its oriented edges $E^{\text{or}}(A)$, is balanced. We claim that $f$ cannot be an extreme point. Since $A$ is balanced, there exists a function $h:A\rightarrow\mathbf{S}^1$ such that for each oriented edge $(u,v)\in E^{\text{or}}(A)$, $h(u)=\sigma_{uv}h(v)$. We note that $E(H_f)$ need not contain every edge in the original graph, so let us identify 
            \[\epsilon:=\max\left\{|f(u)-\sigma_{uv}f(v)|\setbar \{u,v\}\in E(X),|f(u)-\sigma_{uv}f(v)|<1 \right\}<1.\]
        If this set happens to be empty, choose $\epsilon:=0$. Define the nonzero function $g\in\lipsig(X)$ by
            \[g(x):=\begin{cases}
            \frac{1-\epsilon}{2}h(x)&\text{ if }x\in A\\
            0&\text{ if }x\notin A\\
            \end{cases}\]
        for each $x\in V(X)$. Let us check that $\{f+tg\setbar t\in [-1,1]\}\subset B_\lipsig$. If $\{u,v\}\in E(X)$, then one of three possible cases holds: (i) both $u\in A$ and $v\in A$; (ii) both $u\notin A$ and $v\notin A$; (iii) or $u\in A$ and $v\notin A$. For either of the first two cases, it holds
            \[\begin{split}
            |f(u)-\sigma_{uv}f(v)+t\left(g(u)-\sigma_{uv}g(v)\right)|&\leq|f(u)-\sigma_{uv}f(v)|+|t|\cdot|g(u)-\sigma_{uv}g(v)|\\
            &\leq |f(u)-\sigma_{uv}f(v)|+|g(u)-\sigma_{uv}g(v)|\leq 1+0=1.
            \end{split}\] 
        Notice that in these two cases, $|g(u)-\sigma_{uv}g(v)|=0$ since either $g(u)=\sigma_{uv}g(v)$ (case (i)), or $g(u)=g(v)=0$ (case (ii)). In the third case, that is when $u\in A$ and $v\notin A$, it holds $|g(u)-\sigma_{uv}g(v)|=\frac{1-\epsilon}{2}<1$ by definition. In turn,
            \[\begin{split}
            |f(u)-\sigma_{uv}f(v)+t\left(g(u)-\sigma_{uv}g(v)\right)|&=|f(u)-\sigma_{uv}f(v)|+|t||g(u)-\sigma_{uv}g(v)|\\
            &\leq |f(u)-\sigma_{uv}f(v)|+|g(u)-\sigma_{uv}g(v)|\leq \epsilon + \frac{1-\epsilon}{2}\\
            &< \epsilon+1-\epsilon =1.
            \end{split}\]
        and the claim holds. This completes the proof.
        \end{proof}
\section{Compression}
The final result we will present concerns an approach into the computation of the norm $\norm{\cdot}_{\lipsig}$. We wish to say something about how the magnetic transport norm for molecules on the original magnetic graph may be related to the classical transport norm for molecules on the lift graph. First we need a way of translating between spaces.
    \begin{definition}
        Let $(X,\sigma)$ be a magnetic graph, and assume $\sigma$ takes values in $\mathbf{S}^1_p$ for some integer $p\geq 1$. We define the linear compression mapping $$\mathcal{C}:\AEc(\widehat{X})\rightarrow \AEsig(X)$$ as follows: for each $m\in \AEc(\widehat{X}), u\in V(X)$, put
            \[ (\mathcal{C}m)(u)=\sum_{\xi\in \mathbf{S}^1_p}\xi m(u,\xi). \]
    \end{definition}
    \begin{theorem}\label{surjcont}
        Let $(X,\sigma)$ be a magnetic graph, and assume $\sigma$ takes values in $\mathbf{S}^1_p$ for some integer $p\geq 1$. Then $\mathcal{C}$ is a surjective contraction from $\AEc(\widehat{X})$ onto $\AEsig(X)$.
    \end{theorem}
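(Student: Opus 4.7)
The approach is to reduce both claims to a single calculation of $\mathcal{C}$ on the classical atoms of $\AEc(\widehat{X})$. First I would record the parametrization of edges in the lift: each edge $\{u,v\}$ of $X$ lifts to $p$ distinct edges of $\widehat{X}$ indexed by $\xi \in \mathbf{S}^1_p$, namely the adjacent pairs $(u,\xi)\sim(v,\xi\sigma_{uv})$. Evaluating $\mathcal{C}$ on the associated classical atom directly from the definition, only $w \in \{u,v\}$ contributes to the defining sum, and one reads off the key identity
\[
\mathcal{C}\bigl(m_{(u,\xi),(v,\xi\sigma_{uv})}\bigr) \;=\; \xi\cdot m^\sigma_{uv}.
\]

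Once this identity is established, surjectivity is essentially automatic: taking $\xi=1$ exhibits every spanning magnetic atom $m^\sigma_{uv}$ as an image of $\mathcal{C}$, and linearity then gives all of $\AEsig(X)$. This incidentally also verifies the implicit claim that $\mathcal{C}$ really lands inside $\AEsig(X)$.

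For the contraction estimate, the plan is to pass to an arbitrary representation $m = \sum_i a_i\, m_{(u_i,\xi_i),(v_i,\xi_i\sigma_{u_iv_i})}$ in $\AEc(\widehat{X})$ and invoke linearity to obtain $\mathcal{C}m = \sum_i a_i\xi_i \cdot m^\sigma_{u_iv_i}$. Since each $|\xi_i|=1$, the infimum definition of $\norm{\cdot}_\AEsig$ immediately supplies $\norm{\mathcal{C}m}_\AEsig \leq \sum_i |a_i\xi_i| = \sum_i |a_i|$, and taking the infimum on the right over all representations of $m$ delivers $\norm{\mathcal{C}m}_\AEsig \leq \norm{m}_\AEc$.

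I do not anticipate any genuine obstacle here. The only point requiring care is the bookkeeping for edges of the lift, since two different lift parameters $\xi$ over the same edge $\{u,v\}$ in $X$ produce different classical atoms but images that differ only by a unit-modulus scalar; once that parametrization is carefully set up, every other step is forced by linearity and the definitions of the two Arens-Eells norms.
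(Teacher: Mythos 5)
Your proposal is correct and follows essentially the same route as the paper: the central step in both is the identity $\mathcal{C}\bigl(m_{(u,\xi),(v,\xi\sigma_{uv})}\bigr)=\xi\, m^\sigma_{uv}$ (the paper's equation on compressed atoms), from which surjectivity follows by hitting the spanning magnetic atoms and the contraction estimate follows by expanding an arbitrary representation and taking the infimum. The only cosmetic difference is order of presentation — you establish the atom-level identity first and deduce both claims, while the paper derives it as a by-product of the surjectivity computation.
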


    \begin{proof}
        First, let us verify the surjectivity of the mapping $\mathcal{C}$. Suppose we take some $m^\sigma\in\AEsig(X)$, which we represent with a finite linear combination of magnetic atoms in the form
            \[m^\sigma=\sum_{i=1}^n a_i m^\sigma_{u_iv_i}.\]
        By choosing $m^\ast\in\AEc(\widehat{X})$ to be
            \[m^\ast = \sum_{i=1}^n a_i m_{(u_i,1),(v_i\sigma_{u_iv_i})},\]
        we may compute
            \[\begin{split}
            (\mathcal{C}m^\ast)(u)&=\sum_{\xi\in \mathbf{S}^1_p}m^\ast(u,\xi)=\sum_{\xi\in \mathbf{S}^1_p} \sum_{i=1}^n a_i m_{(u_i,1),(v_i\sigma_{u_iv_i})}(u,\xi)\\
            &=\sum_{i=1}^n a_i \delta_{u_i}(u)-\sigma_{u_iv_i}\delta_{v_i}(u)=\sum_{i=1}^n a_i m^\sigma_{u_iv_i}(u)=m^\sigma(u)
            \end{split}\]
        which verifies the onto claim; notice that from this computation, we obtain the general relation that for each adjacent pair of vertices $(u,\omega),(v,\omega\sigma_{uv})$ in the lift, we have 
            \begin{equation}\label{conatoms}\mathcal{C}(m_{(u,\omega),(v,\omega\sigma_{uv})})=\omega m^\sigma_{uv}.\end{equation}
        Now let $m\in\AEc(\widehat{X})$ be given, and suppose we represent it by a linear combination of atoms in the form
            \[m=\sum_{\ell=1}^m b_\ell m_{(u_\ell,\omega_\ell),(v_\ell,\omega_\ell\sigma_{u_\ell v_\ell})}.\]
        In turn, from \eqref{conatoms}, one has
            \[\mathcal{C}m=\sum_{\ell=1}^m b_\ell \omega_\ell m^\sigma_{u_\ell v_\ell}.\]
        Applying inequalities, we see
            \[\norm{\mathcal{C}m}_{\AEsig}\leq \sum_{\ell=1}^m |b_\ell\omega_\ell|= \sum_{\ell=1}^m |b_\ell|\]
        which, after taking an $\inf$ over all such representations of $m$, implies that $\mathcal{C}$ is a contraction.
    \end{proof}
As a simple corollary to the preceding theorem, we have the following equation.
    \begin{corol}\label{signormliftfibre}
        Let $m^\sigma\in \AEsig(X)$. We have the equation
            \[\norm{m^\sigma}_\AEsig=\min\big\{\norm{m}_\AEc\hspace{0.1cm}\big{|}\hspace{1mm}m\in \AEc(\widehat{X}); \mathcal{C}m=m^\sigma\big\}.\]
    \end{corol}
    \begin{proof}
        Knowing that $\mathcal{C}$ is surjective, the set on the right is nonempty; and, knowing also that $\mathcal{C}$ is a contraction, we may write
            \[\norm{m^\sigma}_\AEsig\leq\inf\big\{\norm{m}_\AEc:m\in \AEc(\widehat{X}); \mathcal{C}m=m^\sigma\big\}.\]
        By checking that the norm $\norm{m^\sigma}_\AEsig$ is attained in the set somewhere we will verify the reverse inequality and justify the use of a $\min$ in the equation. To this end, fix some $m^\sigma \in \AEsig(X)$, and realize it as an optimal linear combination of magnetic atoms, i.e.
            \[m^\sigma=\sum_{i=1}^n a_i m^\sigma_{u_iv_i},\hspace{0.3cm}\text{where }\norm{m^\sigma}=\sum_{i=1}^n |a_i|.\]
        Note that such a linear combination of molecules exists by a compactness argument. Notice that, as in the preceding proof, the molecule $m^\ast\in \AEc(\widehat{X})$ given by
            \[m^\ast = \sum_{i=1}^n a_i m_{(u_i,1),(v_i\sigma_{u_iv_i})}\]
        satisfies $\mathcal{C}m^\ast=m^\sigma$. Since the expression above is one realization of $m^\ast$ as a linear combination of (non-magnetic) atoms, it holds that $\norm{m^\ast}_\AEc\leq\sum_i |a_i|=\norm{m^\sigma}_\AEsig$. Moreover, since $\mathcal{C}m^\ast=m^\sigma$ and $\mathcal{C}$ is a contraction, it holds that $\norm{m^\sigma}_\AEsig\leq \norm{m^\ast}_\AEc$. Putting the two inequalities together, i.e.
            \[\norm{m^\ast}_\AEc\leq \sum_i |a_i|=\norm{m^\sigma}_\AEsig\leq \norm{m^\ast}_\AEc\]
        we find $\norm{m^\ast}_\AEc=\norm{m^\sigma}_\AEsig$ as desired. This completes the proof.
    \end{proof}
The author had hoped to improve this result via a conjecture concerning the link of simple magnetic molecules defined on the original graph to associated ones on the lift. We can define \textit{magnetic path molecules} to take the value 1 at some initial vertex (say `$x$'), and at some terminal vertex (say `$y$'), it takes the value of the negative of the product of the signature values along a path between the initial vertex and terminal vertex (say `$-\sigma$'). Based on the classical case, it would be natural to guess that the norm of a magnetic path molecule would coincide with the length of a path on the lift initiating at $(x,1)$ and terminating at $(y,\sigma)$. This supposes the stability of the strong relationship between the norm of path-type molecules and their lengths in the classical case as it is adapted to the magnetic case.
\par As we briefly mentioned in the introduction, such a conjecture does indeed fail. Consider the following counterexample, illustrated in Figure \ref{fig:counterexample}. The graph is on three vertices, with the signature taking the value 1 on the upper two edges and the value $\sqrt{-1}$ along the oriented edge $(u,v)$. Let us define the molecule $m\in\AEsig(X)$ via
    \[m(x)=\begin{cases}
    1  &x=u\\
    1 &x=v\\
    0  &\text{otherwise}\\
    \end{cases}.\] 
In light of the conjecture, one can view $m$ as a magnetic path molecule corresponding to the path initiating at $u$, going around the graph counterclockwise once and terminating at $v$ (this is based on the values of the function at $u$ and $v$).
    \begin{figure}[h]
        \centering
        \includegraphics[width=4cm]{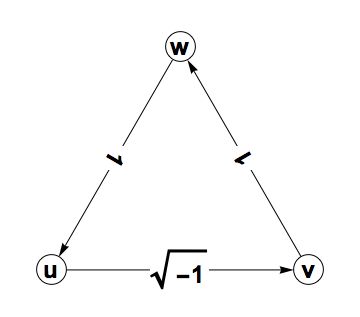}
        \caption{Counterexample graph.}
        \label{fig:counterexample}
    \end{figure}
The simplicity of the graph structure works to our advantage in the sense that any element in the space $\AEsig(X)$ has a unique representation in the basis $\{m^\sigma_{uv},m^\sigma_{vw},m^\sigma_{wu}\}$. A quick calculation yields
    \[m=(1+\sqrt{-1})m^\sigma_{uv}+m^\sigma_{vw}-m^\sigma_{wu}\]
forcing $\norm{m}_{\AEsig}=2+\sqrt{2}$, a non-integer quantity which invalidates the conjecture.

\nocite{*}
\bibliography{references}
\bibliographystyle{plainnat}

\end{document}